\documentclass[11pt]{amsart}
\usepackage{url,bm,amssymb,amscd,amsmath}
\usepackage{graphics,epsfig,color}

\usepackage[letterpaper,asymmetric,left=1.25in,right=1.25in,top=1.1in,bottom=1.
25in,bindingoffset=0.0in]{geometry}

\usepackage[curve,color,graph]{xypic}

\renewcommand{\tilde}{\widetilde}
\renewcommand{\hat}{\widehat}

\newcommand{\hs}{X}

\newcommand{\sing}{{\rm sing}}
\newcommand{\reg}{{\rm reg}}

\newcommand{\bx}{{\bm x}}
\newcommand{\by}{{\bm y}}
\newcommand{\bz}{{\bm z}}
\newcommand{\origin}{{\bm 0}}

\newtheorem{theorem}{\bf Theorem}[section]
\newtheorem{proposition}[theorem]{\bf Proposition}

\newtheorem{lemma}[theorem]{\bf Lemma}

\newtheorem*{THMA}{Theorem A}
\newtheorem*{THMB}{Theorem B}

\theoremstyle{remark}
\newtheorem{remark}[theorem]{\bf Remark}

\begin{document}

\title{Pulling back singularities of codimension one objects}

\begin{author}[L. Giraldo]{Luis Giraldo}
\email{luis.giraldo@mat.ucm.es}
\address{
Dept. \'Algebra, Geometr\'{i}a y Topolog\'{\i}a and IMI \\
Facultad de Ciencias Matem\'aticas \\
Universidad Complutense de Madrid \\
Ciudad Universitaria \\
Plaza de Ciencias 3 \\
28040 Madrid\\
Spain
}
\end{author}

\begin{author}[R. K. W. Roeder]{Roland K. W. Roeder}
\email{rroeder@math.iupui.edu}
\address{ %
IUPUI Department of Mathematical Sciences\\
LD Building, Room 224Q\\
402 North Blackford Street\\
Indianapolis, Indiana 46202-3267\\
 United States }
\end{author}

\date{\today}

\begin{abstract}
We prove that the preimage of a germ of a singular analytic hypersurface under
a germ of a finite holomorphic map $g: (\mathbb{C}^n,\origin) \rightarrow
(\mathbb{C}^n,\origin)$ is again singular.  This provides a generalization of
previous results of this nature by Ebenfelt-Rothschild \cite{ER}, Lebl~\cite{LEBL}, 
and Denkowski \cite{DE}.  The same statement is proved for
pullbacks of singular codimension one holomorphic foliations.
\end{abstract}

\subjclass{32SXX,32S65,14J17}

\maketitle

\section{Introduction}

This note is devoted to proving the following two theorems:

\begin{THMA}
Let $g: (\mathbb{C}^n,{\bm 0}) \rightarrow (\mathbb{C}^n,{\bm 0})$ be a germ of a finite holomorphic map.  Let $\hs$ be a germ of an analytic hypersurface
through ${\bm 0}$ that is singular at ${\bm 0}$.  Then, the germ of a hypersurface $g^{-1}(\hs)$ is singular at ${\bm 0}$.
\end{THMA}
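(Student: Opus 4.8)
The plan is to argue by contradiction and peel the statement down to a claim about local defining equations. Let $\mathfrak m\subset\mathbb C\{z_1,\dots,z_n\}$ be the maximal ideal and let $f$ be a reduced equation of $\hs$. A reduced equation of a hypersurface is unique up to a unit, so $\hs$ is smooth at $\origin$ exactly when $f$ has nonzero linear part; thus ``$\hs$ singular at $\origin$'' means $f\in\mathfrak m^2$. Write $g=(g_1,\dots,g_n)$, so each $g_i\in\mathfrak m$, and let $h$ be a reduced equation of $g^{-1}(\hs)$. Since $g$ is dominant, $f\circ g\not\equiv0$, and $f\circ g=u\,h_1^{m_1}\cdots h_k^{m_k}$ with $u$ a unit and the $h_i$ pairwise non-associate irreducibles (all in $\mathfrak m$), so $h=h_1\cdots h_k$. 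If $k\ge2$ then $h\in\mathfrak m^2$ and we are done; likewise if $k=1$ and $\operatorname{mult}_\origin h_1\ge2$. So assume $k=1$ and $\operatorname{mult}_\origin h_1=1$; after a coordinate change in the source $h_1=w_n$, the germ $g^{-1}(\hs)=\{w_n=0\}$ is smooth, and $f\circ g=u\,w_n^{m}$. Writing $f=f_\nu+f_{\nu+1}+\cdots$ with $\nu=\operatorname{mult}_\origin f\ge2$, each $f_j\circ g$ has order $\ge j$, so $m=\operatorname{mult}_\origin(f\circ g)\ge\nu\ge2$; and since $\{w_n=0\}$ is irreducible with $g(\{w_n=0\})=\hs$, the hypersurface $\hs$ is irreducible. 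Hence it suffices to show: \emph{there is no germ of a finite holomorphic map $g$ and reduced $f\in\mathfrak m^2$ with $f\circ g=u\,w_n^{m}$, $u$ a unit, $m\ge2$.}

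I would prove this by induction on $n$, the case $n=1$ being vacuous. Choose a generic linear form $\ell$ and put $H=\{\ell=0\}$. For generic $\ell$, $f|_H$ is a reduced equation of $\hs\cap H$ with $\operatorname{mult}_\origin(f|_H)=\nu\ge2$, so $\hs\cap H\subset H\cong(\mathbb C^{n-1},\origin)$ is singular at $\origin$, and $g^{-1}(H)=\{\ell\circ g=0\}$ with $\operatorname{mult}_\origin(\ell\circ g)=\operatorname{mult}_\origin g$. Suppose $\operatorname{mult}_\origin g=1$ and, for generic $\ell$, $\{\ell\circ g=0\}$ is transverse to $\{w_n=0\}$ at $\origin$; one checks this holds precisely when $d\phi(\origin)\neq\origin$, where $\phi:=g|_{\{w_n=0\}}\colon(\mathbb C^{n-1},\origin)\to(\hs,\origin)$. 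In that case $g^{-1}(H)$ is a smooth $(n-1)$-dimensional germ, $g|_{g^{-1}(H)}\colon(g^{-1}(H),\origin)\to(H,\origin)$ is a finite holomorphic germ, and
\[
\bigl(g|_{g^{-1}(H)}\bigr)^{-1}(\hs\cap H)=g^{-1}(\hs)\cap g^{-1}(H)=\{w_n=0\}\cap\{\ell\circ g=0\}
\]
is a transverse intersection of two smooth hypersurfaces, hence smooth at $\origin$ — contradicting the inductive hypothesis applied to $g|_{g^{-1}(H)}$ and $\hs\cap H$. This settles every case except $n=2$ and the ``degenerate'' configurations where $d\phi(\origin)=\origin$.

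Those remaining cases are the crux, and there the finiteness of $g$ must be used directly. Differentiating $f\circ g=u\,w_n^m$ gives $(Dg)^{\top}(\nabla f\circ g)=w_n^{m-1}\sigma$ with $\sigma(\origin)\ne\origin$; multiplying by $\operatorname{adj}\bigl((Dg)^{\top}\bigr)$ and using that the Jacobian $J_g$ vanishes to order exactly $m-1$ along $\{w_n=0\}$ (as $g$ is ramified there of index $m$ over $\hs$, since $\operatorname{ord}_\hs f=1$), one cancels $w_n^{m-1}$ and gets $j_0\cdot(\nabla f\circ g)=\operatorname{adj}\bigl((Dg)^{\top}\bigr)\,\sigma$, where $J_g=w_n^{m-1}j_0$ and $w_n\nmid j_0$. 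Comparing orders of vanishing of the two sides along a generic parametrized curve through $\origin$ — finiteness entering through the fact that $g_1,\dots,g_n$ form a system of parameters, hence cannot all vanish to arbitrarily high order along a fixed curve — is meant to force $\operatorname{mult}_\origin f=1$, contradicting $f\in\mathfrak m^2$. The mechanism is cleanest when $n=2$: writing the normalization of $\hs$ as $\psi(t)=(t^{\nu},P(t))$ with $\nu\ge2$ and factoring $\phi=\psi\circ h$, one expands $g(w_1,w_2)=\phi(w_1)+w_2\,\eta(w_1)+w_2^2(\cdots)$; matching $f\circ g$ with $u\,w_2^m$ order by order in $w_2$ successively pins down the Taylor coefficients of $g$ in $w_2$, and their holomorphy eventually forces the coordinate $w_1$ to divide a unit, which is absurd.

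I expect essentially all the difficulty to sit in that last step: extracting from finiteness of $g$, uniformly in $n$, the impossibility of $f\circ g$ being a unit times a power of a coordinate once $\hs$ is singular. The reduction to this statement and the dimensional descent by generic hyperplane sections are routine.
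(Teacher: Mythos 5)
Your reduction to the statement ``there is no finite $g$ and reduced $f\in\mathfrak m^2$ with $f\circ g=u\,w_n^m$'' is sound, and the overall architecture (dimensional descent by hyperplane sections plus a hard residual case attacked through the Jacobian) has the same shape as the paper's argument. But there are two genuine gaps. First, your hyperplane sections pass through the origin, and $g^{-1}(H)=\{\ell\circ g=0\}$ is smooth at $\origin$ only when some component of $g$ has a suitable nonzero linear part; for a finite map such as $g=(x_1^2,\dots,x_n^2)$ every such preimage is singular at $\origin$, so the descent is unavailable in exactly the cases you label ``degenerate,'' and these are not exceptional. The paper circumvents this in Proposition \ref{PROP:SLICING} by slicing not at the origin but at a generic smooth point $p_0$ of a maximal-dimensional component $S$ of $\hs_\sing$: there one can choose a hyperplane transverse to $S$ and to $g$, so that $g^{-1}(H)$ is smooth and meets $g^{-1}(\hs)$ transversally, and the induction runs. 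This forces the isolated-singularity case (where no such $p_0$ exists) to be treated separately, and that is the second, more serious gap: your treatment of the crux case is only a heuristic (``is meant to force,'' ``I expect''), and it is precisely the case $m\ge 2$, i.e.\ $\det(Dg)$ vanishing identically on $g^{-1}(\hs)$, that was left open by Ebenfelt--Rothschild, Lebl, and Denkowski. Your adjugate identity $j_0\,(\nabla f\circ g)=\operatorname{adj}\bigl((Dg)^{\top}\bigr)\sigma$ gives one scalar relation per row, but ``comparing orders along a generic curve'' does not obviously close the argument, since both sides can vanish to high order simultaneously.

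The paper's resolution of that case (Proposition \ref{PROP:ZERO_DIMENSION_SING_SET}) is the key idea your proposal is missing. It first converts the hypersurface problem into one about the codimension-one foliation defined by $d\psi$ (this is exactly what discards the multiplicity $m$), writes $\omega=\sum a_i\,dy_i$ with the $a_i$ a regular sequence (isolated singularity), observes that the $a_i\circ g$ again form a regular sequence because $g$ is finite, and then uses exactness of the Koszul complex at $\Omega^{n-1}$ to upgrade the single relation $\sum_i a_i(g(\bx))\,\partial g_i/\partial x_\ell\equiv 0$ to the much stronger membership statement $\partial g_i/\partial x_\ell\in(a_1\circ g,\dots,a_n\circ g)$ for every $i$ and every $\ell\ge2$. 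Only then does an order-of-vanishing comparison succeed, and even there one needs the reparametrization $x_1\mapsto x_1^N$ to make all components of $g$ vanish to high order before comparing a single monomial. To complete your proof along your own lines you would need a substitute for this Koszul step; without it the argument does not go through.
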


\begin{THMB} 
Let $g: (\mathbb{C}^n,{\bm 0}) \rightarrow (\mathbb{C}^n,{\bm 0})$ be a germ of a finite holomorphic map.  Let $\mathcal{F}$ be a germ of a codimension one
holomorphic foliation that is singular at ${\bm 0}$.  Then, the pullback foliation $g^* \mathcal{F}$ is singular at ${\bm 0}$.
\end{THMB}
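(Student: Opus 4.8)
The plan is to argue by contradiction and reduce Theorem B to Theorem A. Write $\mathcal F = [\omega]$ with $\omega$ a reduced holomorphic $1$-form (its coefficients having no common factor), so that $\mathcal F$ being singular at $\origin$ means $\omega(\origin)=0$. Suppose $g^*\mathcal F$ were non-singular at $\origin$. I will produce a non-constant holomorphic first integral $Q$ of $\mathcal F$ with $Q(\origin)=0$; the hypothesis $\omega(\origin)=0$ will force its zero set $Z=\{Q=0\}$ to be singular at $\origin$, whereas $g^{-1}(Z)$ will turn out to be smooth at $\origin$ --- contradicting Theorem~A.

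First I would normalise the picture. Since $g^*\mathcal F$ is non-singular at $\origin$, the holomorphic Frobenius theorem gives coordinates $w=(w_1,\dots,w_n)$ centred at $\origin$ in the source with $g^*\mathcal F=[dw_1]$; equivalently $g^*\omega=\tilde h\,dw_1$ for a holomorphic germ $\tilde h\not\equiv 0$. Next I would descend the obvious first integral $w_1$ through the finite map $g$. Let $d=\deg g$ and let $Q$ be the fibrewise norm of $w_1$: for $y$ off the discriminant of $g$, $Q(y)$ is the product of the $d$ values of $w_1$ on $g^{-1}(y)$. This $Q$ is holomorphic near $\origin$ (bounded and holomorphic off a hypersurface, hence extending by Riemann's theorem), vanishes at $\origin$ (there the whole fibre collapses to $\origin$, where $w_1$ vanishes), and is not identically zero (its zero set is $g(\{w_1=0\})$, a proper hypersurface). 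Over the complement of the discriminant and of $\operatorname{Sing}\mathcal F$ the map $g$ restricts to an unramified covering, and the preimage of a leaf of $\mathcal F$ is a union of leaves $\{w_1=\const\}$ of $g^*\mathcal F$; hence $w_1$ is constant along each lift of a path in a leaf of $\mathcal F$, so $Q$ is locally constant on the leaves of $\mathcal F$, i.e.\ $dQ\wedge\omega=0$ off a hypersurface and therefore everywhere. Because $\omega$ is reduced this yields $dQ=r\,\omega$ with $r$ a holomorphic germ, $r\not\equiv 0$. Dually, $Q\circ g$ is a first integral of $g^*\mathcal F=[dw_1]$, so $d(Q\circ g)\wedge dw_1=0$, which forces $Q\circ g=\phi(w_1)$ for a one-variable germ $\phi$ with $\phi(0)=Q(\origin)=0$ and $\phi\not\equiv 0$; thus $Q\circ g=w_1^{s}\cdot(\text{unit})$ for some $s\ge 1$.

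To finish, I would observe two things. First, $Z=\{Q=0\}$ is singular at $\origin$: if it were smooth we could take $Q=u\,z_1^{k}$ with $u$ a unit, so that $dQ=z_1^{k-1}\omega'$ with $\omega'=ku\,dz_1+z_1\,du$; here $\omega'$ is reduced (its $dz_1$-coefficient is a unit) and $\omega'(\origin)=ku(\origin)\,dz_1\neq 0$, and comparing with $dQ=r\,\omega$ and cancelling the common factor $z_1^{k-1}$ would make $\omega$ an associate of $\omega'$, contradicting $\omega(\origin)=0$. Second, $g^{-1}(Z)=\{Q\circ g=0\}=\{w_1=0\}$ is smooth at $\origin$. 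These two facts contradict Theorem~A applied to the singular hypersurface $Z$, so $g^*\mathcal F$ must be singular at $\origin$.

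The step I expect to be the main obstacle is the construction and control of $Q$: verifying that the fibrewise norm of $w_1$ is holomorphic and is genuinely a first integral of $\mathcal F$, and --- above all --- that singularity of $\mathcal F$ at $\origin$ is inherited by the zero set $Z=\{Q=0\}$ through the relation $dQ=r\,\omega$. Everything downstream of that is formal and uses only Theorem~A.
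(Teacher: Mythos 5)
Your argument is correct and follows essentially the paper's own route: it is the ``(b) implies (a)'' direction of Proposition~\ref{PROP:A_AND_B_EQUIVALENT}, straightening $g^*\mathcal F$ to $[dw_1]$, pushing the first integral $w_1$ forward through the finite map $g$ to obtain a first integral $Q$ of $\mathcal F$ whose zero level $Z$ is singular at $\origin$ yet has smooth preimage, and then invoking Theorem~A. The only (harmless) variation is that you take the fibrewise \emph{product} of $w_1$ rather than the paper's fibrewise sum $\psi=g_*\pi_1$, which lets you identify $g^{-1}(Z)=\{w_1=0\}$ directly from $Q\circ g=\phi(w_1)$ instead of via Lemma~\ref{LEM:LOCAL_LEAFS_PULLBACK} and the density argument used in the paper.
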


\noindent
In fact, these two theorems are equivalent; see Proposition \ref{PROP:A_AND_B_EQUIVALENT} below.

It is important to note that Theorem A is about the ``reduced structure'' of
analytic hypersurfaces and about the ``set-theoretic'' preimage---no
multiplicities are taken into consideration.   This causes a subtlety that is
the main challenge in the proofs.  In fact, the equivalence between Theorems A
and B plays an important conceptual role\footnote{We use 
it in the proof of Proposition \ref{PROP:ZERO_DIMENSION_SING_SET}, which is the most
technical step of the proof of Theorem A.} in working with the set-theoretic
preimage of a hypersurface.  This is because the pullback of a singular codimension one
holomorphic foliation is defined in a way that ignores the multiplicities arising
from the critical hypersurfaces of $g$; see Section \ref{SUBSEC:FOLIATIONS}.

When $X$ is a germ of an analytic subvariety of $\mathbb{C}^n$ having arbitrary
dimension, a version of Theorem A has been proved by Ebenfelt-Rothschild
\cite[Theorem 2.1]{ER}, Lebl \cite[Section 4]{LEBL}, and Denkowski
\cite[Theorem 1.2]{DE} under the additional hypothesis that the
Jacobian ${\rm det}(Dg)$ does not vanish identically on $g^{-1}(X)$.  However,
this hypothesis has been removed in the case that ${\rm dim}(X) = 1$, see
\cite[Theorem 4.1]{ER} and \cite[Section 4]{LEBL}.  We refer the reader to the
aforementioned papers for further details.

Surprisingly, until now, the statement of Theorem A seems to be open
in the case that ${\rm det}(Dg)$ is allowed to vanish identically on $g^{-1}(X)$.

Note also that a version of Theorem B was proved in dimension $2$ by Kaschner,
P\'erez, and the second author of the present paper in \cite[Lemma 2.4]{KPR}.

In Section \ref{SEC:BACKGROUND} we give background on finite holomorphic maps,
analytic hypersurfaces, and singular codimension one holomorphic foliations.
Section \ref{SEC:EQUIV} is devoted to proving that Theorems A and B are
equivalent.   We conclude the proof of both theorems by presenting a
proof of Theorem A in Section \ref{SEC:PROOFS}.

\vspace{0.1in}
\noindent\textbf{Notational Conventions}:\\
Germs and their representatives will often be used interchangeably as a minor abuse of notation.
The notation $g : (U, 0)  \rightarrow (V, 0)$ when used for $g$
that is not a germ means that $g : U \rightarrow V$ is the map and $g(0) = 0$.

\vspace{0.1in}
\noindent\textbf{Acknowledgments}:\\
We thank Javier Fern\'{a}ndez de Bobadilla and Jenia Tevelev for helpful
conversations.  We thank the referee for doing a careful reading of our paper
and for several helpful comments that have helped us to improve the writing.
Part of this research was done while the first author was visiting the
Department of Mathematics of Indiana University at Bloomington, supported by
the ``Salvador de Madariaga'' grant no. PRX18/00404 funded by the Spanish
Government.  He wants to thank the Indiana University Math Department for their
hospitality.  He has also been partially supported by the research grants
MTM2015-63612-P and PGC2018-098321-B-I00, of the Spanish Government.  The
second author was supported by NSF grant DMS-1348589.

\section{Background}
\label{SEC:BACKGROUND}

\subsection{Finite Maps}
A holomorphic germ $g: (\mathbb{C}^n,\origin) \rightarrow (\mathbb{C}^n,\origin)$ is {\em finite} if
$g^{-1}(\origin) \cap U = \{\origin\}$ for a sufficiently small neighborhood $U$ of $\origin$ and some representative $g$.
Note that this implies that if $\by$ is sufficiently close to $\origin$ then $g^{-1}(\by) \cap U$ is a finite set.
The following lemma is well-known:

\begin{lemma}\label{LEM:LEMMA_PROPER_MAP}
Let $g: (\mathbb{C}^n,{\bm 0}) \rightarrow (\mathbb{C}^n,{\bm 0})$ be a germ of
a finite holomorphic map.  Then, there are arbitrarily small neighborhoods
$U,W$ of ${\bm 0}$ and a representative $g: (U,{\bm 0}) \rightarrow (W,{\bm
0})$ of the germ that is proper.
\end{lemma}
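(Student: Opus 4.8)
The plan is to use the defining property of finiteness — that $g^{-1}(\origin)$ is isolated — together with a compactness argument to cut out a neighborhood on which $g$ becomes proper. First I would fix a representative $g \colon V \to \mathbb{C}^n$ of the germ, defined on some open neighborhood $V$ of $\origin$, such that $g^{-1}(\origin) \cap \overline{B}(\origin,r) = \{\origin\}$ for some closed ball $\overline{B}(\origin,r) \subset V$; this is possible by the definition of finiteness. The key elementary observation is that $g$ restricted to the sphere $S = \partial B(\origin,r)$ avoids $\origin$, and since $S$ is compact, the continuous function $\by \mapsto |g(\by)|$ attains a positive minimum $\delta > 0$ on $S$. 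Thus $g(S)$ is contained in $\{|\bz| \ge \delta\}$, which already keeps the boundary sphere away from a small target ball.

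Next I would set $W = B(\origin,\delta)$ and $U = g^{-1}(W) \cap B(\origin,r)$, and check that $g \colon U \to W$ is proper. The point is that if $K \subset W$ is compact, then $g^{-1}(K) \cap U$ is closed in $U$; to see it is compact it suffices to show its closure in $\mathbb{C}^n$ is contained in $U$, i.e. that no sequence in $g^{-1}(K) \cap U$ escapes to $\partial B(\origin,r)$. But a limit point $\by$ on $\partial B(\origin,r) = S$ would have $|g(\by)| \ge \delta$, contradicting $g(\by) \in \overline{K} \subset \overline{W} = \overline{B}(\origin,\delta)$ — here one should take $K$ compactly contained, or shrink slightly, so that $|g(\by)| = \delta$ is likewise excluded; a clean way is to use open balls and note $\overline{g^{-1}(K)}$ meets $S$ only where $|g| \le \sup_{\bz \in K}|\bz| < \delta$, which is empty. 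Hence $g^{-1}(K) \cap U$ is a closed and bounded subset of $\mathbb{C}^n$, therefore compact, so $g$ is proper. Finally, to get $U$ and $W$ to be honest neighborhoods of $\origin$ and to make them arbitrarily small, one shrinks $r$ at the outset (the construction works for every sufficiently small $r$, and forces $\delta = \delta(r) \to 0$), and one may replace $U$ by a connected component containing $\origin$ if connectedness is desired.

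The main obstacle — really a bookkeeping subtlety rather than a deep difficulty — is handling the boundary sphere carefully: one must ensure that the preimage of a compact subset of $W$ does not accumulate on $S$, which is exactly where the positive lower bound $\delta$ on $|g|_S|$ is used, and one must be slightly careful about whether to work with open or closed balls so that the strict inequality $|g(\by)| \ge \delta$ on $S$ genuinely contradicts membership in the target ball. Everything else is the standard fact that a continuous map between locally compact Hausdorff spaces with compact fibers that is also closed is proper, which here follows directly from the closed/bounded characterization of compactness in $\mathbb{C}^n$.
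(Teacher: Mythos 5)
Your argument is correct and complete. Note that the paper itself does not prove this lemma at all---it simply cites \cite[Theorem 15.1.6]{RUDIN}---so you have supplied the standard self-contained argument that the reference contains. The key steps all check out: finiteness gives a closed ball $\overline{B}(\origin,r)$ on which $g^{-1}(\origin)$ is just the origin, compactness of the sphere $S=\partial B(\origin,r)$ gives the positive minimum $\delta$ of $|g|$ there, and with $W=B(\origin,\delta)$, $U=g^{-1}(W)\cap B(\origin,r)$, any limit point of $g^{-1}(K)\cap U$ for compact $K\subset W$ lands in $K$ by continuity, hence cannot lie on $S$ (where $|g|\ge\delta$ but points of $K$ have $|g|<\delta$ since $W$ is the open ball), so $g^{-1}(K)\cap U$ is closed and bounded in $\mathbb{C}^n$. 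The boundary subtlety you flag is actually already resolved by your choice of an \emph{open} ball for $W$: since $K$ is a compact subset of $B(\origin,\delta)$, every point of $K$ satisfies the strict inequality $|\bz|<\delta$, so no extra shrinking or ``compactly contained'' hypothesis is needed. Your closing remarks about $\delta(r)\to 0$ as $r\to 0$ (so both $U$ and $W$ can be made arbitrarily small) and about passing to a connected component are also correct.
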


We refer the reader to \cite[Theorem 15.1.6]{RUDIN} for a proof of Lemma \ref{LEM:LEMMA_PROPER_MAP}.
We also refer the reader to \cite[p. 667-670]{GH} for various additional properties of finite maps.

\subsection{Hypersurfaces}
We refer the reader to the monograph by Milnor \cite{MILNOR} for
more details than will be presented here.
Let $U$ be an open neighborhood of~${\bm 0}$ in $\mathbb{C}^n$.  The term {\em
hypersurface in $U$} will refer to a potentially reducible analytic subset of
$U$, each of whose irreducible components has codimension one.  
Throughout this paper we will be interested in the ``reduced structure'' of a
hypersurface, thinking of it as an analytic set rather than a divisor.

Note that if $\hs$ is a reducible germ of a hypersurface at $\origin$ then
$\hs$ is singular at $\origin$.  Moreover, reducibility is preserved under
preimage by $g$.  Therefore, it suffices to prove Theorem A for irreducible
germs, and most of our attention will be focused on them.

Associated to any irreducible hypersurface $\hs$ in $U$ is a holomorphic
function $\psi: U \rightarrow \mathbb{C}$ that is prime in the ring of germs of
holomorphic functions  $\mathcal{O}_{\mathbb{C}^n,{\bm 0}}$ such that
\begin{align}\label{EQN:DEFINING_HS}
\hs = \{{\bf x} \in U \, : \, \psi({\bf x}) = 0\} \quad \mbox{and} \quad \mbox{$d\psi \not \equiv 0$ on 
$\hs$}.
\end{align}
The condition that $d\psi \not \equiv 0$ on $\hs$
follows from $\psi$ being a prime element of
$\mathcal{O}_{\mathbb{C}^n,{\bm 0}}$ and it ensures that $\psi$ vanishes to
order one on the smooth part of $\hs$.  In other words, the Cartier divisor $(\psi)$ defines the
reduced divisor associated to $\hs$.  

\begin{lemma}\label{LEM:SINGULAR_PTS_HYPERSURFACE}
Suppose $\hs$ is an irreducible hypersurface passing through ${\bm 0}$ that is
given by~(\ref{EQN:DEFINING_HS}).  If $\hs$ is
singular at ${\bm 0}$ then there is a smaller neighborhood $U' \subset U$ of
${\bm 0}$ in which the singular locus of $\hs$ is given by
\begin{align*}
\hs_\sing  = \{\bx \in U' \, : \, d\psi(\bx) = 0\},
\end{align*}
where $d$ denotes the exterior derivative.
\end{lemma}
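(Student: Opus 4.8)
The plan is to combine the Jacobian criterion, which describes $\hs_\sing$ as $\hs\cap\{d\psi=0\}$, with the separate observation that near $\origin$ the zero set of $d\psi$ is \emph{already} contained in $\hs$, so that intersecting with $\hs$ is redundant and the displayed formula results. For the first ingredient, the point that needs care is that $\psi$ must be reduced not merely at $\origin$ but at every point of a neighborhood before the Jacobian criterion can be applied pointwise. Since $\psi$ is prime, hence squarefree, in $\mathcal{O}_{\mathbb{C}^n,\origin}$, the analytic set $\{\psi=0\}\cap\{d\psi=0\}$ has codimension at least two at $\origin$; by upper semicontinuity of the local dimension of an analytic set, it has codimension at least two throughout some neighborhood $U'$ of $\origin$, and this forces the germ of $\psi$ to be squarefree at every point of $U'$. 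At such a point $\bx$, the implicit function theorem gives $\bx\in\hs_\reg$ whenever $d\psi(\bx)\neq 0$; conversely, if $d\psi(\bx)=0$ while $\hs$ were smooth at $\bx$, a local reduced defining equation $\phi$ of $\hs$ at $\bx$ would satisfy $\psi=u\phi$ for a unit $u$ (by squarefreeness), whence $d\psi(\bx)=u(\bx)\,d\phi(\bx)\neq 0$, a contradiction; so $\bx\in\hs_\sing$. This yields $\hs_\sing=\{\bx\in U':\psi(\bx)=0\text{ and }d\psi(\bx)=0\}$.

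The main step, and the one I expect to be the real obstacle, is to show that the germ of $V:=\{d\psi=0\}$ at $\origin$ is contained in the germ of $\hs$ at $\origin$; after one further shrinking this gives $V\subseteq\hs$, so the condition ``$\psi=0$'' in the formula above is automatic. Let $W$ be any irreducible component of the germ $(V,\origin)$; by definition $\origin\in W$. On the regular locus $W_\reg$, which is a connected complex manifold since $W$ is irreducible, the differential of the restriction $\psi|_{W_\reg}$ is the restriction of $d\psi$ to the tangent spaces of $W_\reg$, and this vanishes identically because every partial derivative of $\psi$ vanishes on all of $W$. Hence $\psi$ is constant on $W_\reg$, therefore constant on $W=\overline{W_\reg}$, and the constant equals $\psi(\origin)=0$; thus $W\subseteq\hs$. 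Running this over all components of $(V,\origin)$ gives $(V,\origin)\subseteq(\hs,\origin)$.

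Combining the two parts, on a sufficiently small $U'$ we obtain $\hs_\sing=\hs\cap\{d\psi=0\}=\{d\psi=0\}$, which is exactly the asserted description. The hypothesis that $\hs$ is singular at $\origin$ is used only to guarantee $\origin\in\{d\psi=0\}$, i.e. that both sides are nonempty germs; without it the statement still holds, with both sides simply empty near $\origin$. The delicate points are thus the two flagged above: controlling $\{d\psi=0\}$ rather than just $\hs\cap\{d\psi=0\}$ via the ``constant on critical components'' argument, and the easy-to-overlook need to propagate reducedness of $\psi$ from $\origin$ to all nearby points before invoking the Jacobian criterion.
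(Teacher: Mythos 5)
Your proof is correct and its key step --- showing that $\psi$ vanishes identically on each irreducible component of $\{d\psi=0\}$ through $\origin$ because $\psi$ restricted to the connected regular locus of such a component has vanishing differential, hence is constant equal to $\psi(\origin)=0$ --- is exactly the argument the paper uses. The only difference is that you also supply a detailed justification of the Jacobian criterion $\hs_\sing=\hs\cap\{d\psi=0\}$ (propagating squarefreeness of $\psi$ to nearby points), a step the paper simply asserts as known.
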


\begin{proof}
We have, 
\begin{align*}
\hs_\sing  = \{\bx \in U \, : \, \psi(x) = 0 \mbox{  and  } d\psi(\bx) = 0\}.
\end{align*}
Let
\begin{align*}
S  = \{\bx \in U \, : \, d\psi(\bx) = 0\}.
\end{align*}
We can choose $U' \subset U$ to be sufficiently small so that each irreducible component
of $S \cap U'$ passes through $\origin$.
For the remainder of the proof we will work entirely in $U'$.  

It remains to show that $\psi|_S({\bx}) \equiv 0$.  This is trivial if $S = \{\origin\}$.  Otherwise, let $S_0$ be an irreducible
component of $S$.   The smooth locus $S_{0,\reg}$ is a submanifold of $U'$
that is dense in~$S_0$.  Any two points $p, q \in S_{0,\reg}$ can be
connected by a smooth path $\gamma: [0,1] \rightarrow S_{0,\reg}$ and 
\begin{align*}
\psi(q) - \psi(p) = \int_\gamma d\psi = 0.
\end{align*}
Therefore $\psi$ is constant on $S_{0,\reg}$ and hence on $S_0$.  As $S_0$
passes through $\origin$ and $\psi(\origin) = 0$, $\psi$ vanishes on $S_0$.
\end{proof}

\subsection{Singular codimension one holomorphic foliations}
\label{SUBSEC:FOLIATIONS}
We refer the reader to the survey paper by Cerveau \cite{CERVEAU} for more
background than we will present here.  A singular codimension one holomorphic
foliation $\mathcal{F}$ in a sufficiently small neighborhood $U$ of $\origin$
in $\mathbb{C}^n$ can be described by a holomorphic one form $\omega$ having the following
two properties:
\begin{itemize}
\item[(i)] $\omega \wedge d\omega = 0$ (integrability), and
\item[(ii)] $\omega_\sing := \{\bx \in U \, : \,  \omega(\bx) = 0\}$ has codimension $\geq 2$.
\end{itemize}
It is a consequence of the Frobenius Theorem that any $\bx \in U \setminus \omega_\sing$ has a neighborhood $N$
in which there is a system of holomorphic coordinates $\bz = (z_1,\ldots,z_n)$ on $N$ such that $\omega = u(\bz) dz_1$
with $u(\bz) \neq 0$ on $N$ (i.e.\ $u$ is a unit).  In these coordinates, the tangent planes to ${\rm Ker}(\omega)$ become
the ``vertical hyperplanes'' $\{z_1 = {\rm const}\}$.  They are interpreted as {\em local leaves} of $\mathcal{F}$ in the local coordinate
$\bz$.  By transporting them back to the original coordinate $\bm y$ and gluing, one obtains {\em global leaves} of $\mathcal{F}$
on $U \setminus \omega_\sing$.  One calls $\omega_\sing$ the singular locus of $\mathcal{F}$ and denotes
it by $\mathcal{F}_\sing$.

Occasionally, and only when there is no ambiguity, we will use the term ``foliation'' to mean ``singular codimension one holomorphic
foliation''.

Let $U, W \subset \mathbb{C}^n$ be open sets and let $g: U \rightarrow W$ be a finite holomorphic mapping.
Suppose $\mathcal{F}$ is a singular codimension one holomorphic foliation that is defined on $W$ by a holomorphic one form $\omega$ (satisfying (i) and (ii)).
Then, one defines the {\em pullback} $g^* \mathcal{F}$ on $U$ by a holomorphic one form $g^\# \omega$
on $U$ obtained by dividing $g^* \omega$ by a suitable holomorphic function, in order that the zero locus of
$g^\# \omega$ is of codimension two or larger.\footnote{The notation $g^\# \omega$ is non-standard and the reader
should note that it is only well-defined up to multiplication by a non-vanishing holomorphic function.}  Because
pullback $g^*$ respects wedge product and exterior derivatives, $g^* \omega$ still satisfies (i), and a simple
calculation shows that (i) continues to hold after rescaling $g^* \omega$ to obtain $g^\# \omega$.  Meanwhile, the rescaling
was done so that $g^\# \omega$ satisfies (ii). 

\begin{remark}
It is possible that $g^* \mathcal{F}$ be singular at $\bm x$ even though $\mathcal{F}$ is regular at $g(\bm x)$.  For example,
in dimension two, if $\omega = dy_1$ and $g(x_1,x_2) = (x_1^2-x_2^2,x_2)$, then 
\begin{align*}
g^* \omega = d(x_1^2-x_2^2) = 2x_1 dx_1-2x_2dx_2 = g^\# \omega,
\end{align*}
which vanishes at $(0,0)$.
\end{remark}

It is immediate from the definition that in a sufficiently small neighborhood $N$ of any regular point for $g$ the pullback
$g^*$ sends local leaves of $\mathcal{F}$ in $g(N)$ to local leaves of $g^* \mathcal{F}$ in~$N$.  The following
lemma justifies that this also happens more generally:

\begin{lemma}\label{LEM:LOCAL_LEAFS_PULLBACK}
Suppose that $\bx$ is a regular point for $g^* \mathcal{F}$ and $g(\bx)$ is a regular point for~$\mathcal{F}$.  Then, there
exists a neighborhood $N$ of $\bx$ such that if $L$ is the local leaf of $\mathcal{F}$  through $g(\bx)$  in $g(N)$
then $g^{-1}(L)$ is the local leaf of $g^* \mathcal{F}$ through $\bx$ in $N$.
\end{lemma}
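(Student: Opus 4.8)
The plan is to compare the Frobenius normal forms of $\mathcal{F}$ near $g(\bx)$ and of $g^{*}\mathcal{F}$ near $\bx$, and to exploit the relation between their defining one-forms. As in Section~\ref{SUBSEC:FOLIATIONS}, first I would choose a neighborhood $V$ of $g(\bx)$ with holomorphic coordinates $\bz=(z_{1},\dots,z_{n})$ in which $\omega=u(\bz)\,dz_{1}$ with $u$ a unit and $z_{1}(g(\bx))=0$; then the local leaf of $\mathcal{F}$ through $g(\bx)$ is $\{z_{1}=0\}$. Likewise I would choose a neighborhood $N$ of $\bx$, taken to be a product $D_{1}\times D'$ of a disk in a coordinate $w_{1}$ and a polydisk in the remaining coordinates $(w_{2},\dots,w_{n})$, in which $g^{\#}\omega=v({\bm w})\,dw_{1}$ with $v$ a unit and $w_{1}(\bx)=0$; then the local leaf of $g^{*}\mathcal{F}$ through $\bx$ in $N$ is $\{w_{1}=0\}$. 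After shrinking I may assume $g(N)\subset V$, so that $L=\{z_{1}=0\}\cap g(N)$ and, writing $\phi:=z_{1}\circ g$, the preimage of $L$ inside $N$ is $\{{\bm w}\in N:\phi({\bm w})=0\}$, with $\phi(\bx)=0$.

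The key step is to show that $\phi$ is a function of $w_{1}$ alone. Let $h$ be the holomorphic function with $g^{*}\omega=h\cdot g^{\#}\omega$ (the function by which $g^{*}\omega$ was divided to produce $g^{\#}\omega$). Since pullback is compatible with $d$, we have $g^{*}\omega=g^{*}(u\,dz_{1})=(u\circ g)\,d\phi$, and therefore
\[
(u\circ g)\,d\phi=h\,v\,dw_{1},
\]
so that $d\phi=\mu\,dw_{1}$ with $\mu:=hv/(u\circ g)$ holomorphic on $N$, because $u\circ g$ is a unit. Hence $\partial\phi/\partial w_{j}\equiv 0$ for $j\geq 2$, and as $N$ is a product of connected sets this forces $\phi({\bm w})=\Phi(w_{1})$ for some holomorphic $\Phi$ on $D_{1}$ with $\Phi(0)=0$. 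Consequently the preimage of $L$ in $N$ equals $\{{\bm w}\in N:\Phi(w_{1})=0\}$, which is a union of local leaves $\{w_{1}=\const\}$ of $g^{*}\mathcal{F}$.

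Finally I would isolate the leaf through $\bx$. Since $g$ is finite it is open (equivalently $\det(Dg)\not\equiv 0$), so $g(N)$ is not contained in the hypersurface $\{z_{1}=0\}$; thus $\phi\not\equiv 0$ on $N$, hence $\Phi\not\equiv 0$, and the zeros of $\Phi$ in $D_{1}$ are isolated. Shrinking $D_{1}$ (and with it $N$) so that $0$ is the only zero of $\Phi$ in $D_{1}$, I get that the preimage of $L$ in $N$ is exactly $\{w_{1}=0\}$, which is the local leaf of $g^{*}\mathcal{F}$ through $\bx$ in $N$. I expect the one point requiring care to be precisely the reduced, set-theoretic nature of the conclusion: $\phi$ may vanish to high order along $\{w_{1}=0\}$, equivalently $h$ may vanish there so that $g^{\#}\omega\neq g^{*}\omega$, yet the zero set of $\phi$ is still exactly that single hyperplane, so no spurious extra components appear in $g^{-1}(L)$.
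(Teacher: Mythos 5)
Your proof is correct, and it reaches the crucial intermediate fact --- that in adapted coordinates the first component of $g$ depends only on the first coordinate --- by a different mechanism than the paper. The paper argues geometrically: it restricts to an open set $N_0 \subset N \setminus {\rm crit}(g)$, where $g$ is a local biholomorphism and therefore visibly carries local leaves of $g^*\mathcal{F}$ to local leaves of $\mathcal{F}$, reads off that $g_1$ is independent of the remaining variables on $N_0$, and then extends this to all of $N$ by the identity theorem. You instead work directly with the defining one-forms: writing $g^*\omega = h\cdot g^\#\omega$ and comparing the Frobenius normal forms $\omega = u\,dz_1$ and $g^\#\omega = v\,dw_1$ gives $(u\circ g)\,d\phi = hv\,dw_1$ with $\phi = z_1\circ g$, so $\partial\phi/\partial w_j \equiv 0$ for $j \geq 2$ on a product neighborhood. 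This buys you a cleaner argument (no need to excise the critical set or invoke analytic continuation), and your final step --- $\Phi \not\equiv 0$ because $g$ is open, hence the zeros of $\Phi$ are isolated and one can shrink $D_1$ so that $0$ is the only one --- makes explicit a shrinking that the paper merely asserts, while also addressing head-on the possibility that $\phi$ vanishes to high order along $\{w_1=0\}$. One small inaccuracy: your parenthetical ``equivalently $\det(Dg)\not\equiv 0$'' is not a correct characterization of openness (e.g.\ $(x,y)\mapsto(x,xy)$ has $\det Dg = x \not\equiv 0$ but is not open at the origin); what you actually need is only that finite holomorphic maps are open, which is the cited fact from Rudin, so the argument is unaffected.
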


\begin{proof}
Since $g^* \mathcal{F}$ is regular at $\bx$ and $\mathcal{F}$ is regular at $g(\bx)$ we can choose a connected neighborhood $N$
of $\bm x$ sufficiently small so that:
\begin{itemize}
\item[(a)]  there is a system of local coordinates $(z_1,\ldots,z_n)$ in $N$ in which
local leaves of $g^* \mathcal{F}$ are given by vertical hyperplanes $\{z_1 = {\rm const}\}$,  and
\item[(b)] there is a system of local coordinates $(w_1,\ldots,w_n)$
in $g(N)$ in which $\mathcal{F}$ is given by vertical hyperplanes $\{w_1 = {\rm const}\}$.  
\end{itemize}
Let $N_0 \subset N \setminus {\rm crit}(g)$
be an open set.  (Remark that since $g$ is finite it is an open mapping \cite[Theorem 15.1.6]{RUDIN} implying that ${\rm crit}(g)$ is a proper analytic subvariety of $N$.)  Then, $g$ sends local leaves of $g^* \mathcal{F}$ within $N_0$ to local leaves of $\mathcal{F}$ within $g(N_0)$.
Therefore, if we write
\begin{align*}
(w_1,\ldots,w_n) = g(z_1,\ldots,z_n) = (g_1(z_1,\ldots,z_n),\ldots,g_n(z_1,\ldots,z_n)),
\end{align*}
then within $N_0$ we have that $g_1(z_1,\ldots,z_n)$ is independent of $z_2,\ldots,z_n$, i.e.\ 
\begin{align*}
g_1(z_1,\ldots,z_n) \equiv g_1(z_1).
\end{align*}
Because $N$ is connected, this property carries over to $g$ within all of $N$.
In particular, $g$ sends every local leaf of $g^* \mathcal{F}$ in $N$ to some local leaf of $\mathcal{F}$ in $g(N)$.

We can then choose a smaller neighborhood $N' \subset N$ so that the local leaf of $g^* \mathcal{F}$ through $\bx$
is the only local leaf of $g^* \mathcal{F}$ within $N'$ whose image contains $g(\bx)$.
\end{proof}

\vspace{0.1in}

\section{Equivalence of Theorems A and B}
\label{SEC:EQUIV}

\begin{proposition}\label{PROP:A_AND_B_EQUIVALENT}
Theorem A holds if and only if Theorem B holds.
\end{proposition}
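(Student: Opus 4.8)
The plan is to prove the two implications separately, using in both directions the fact that a germ of a hypersurface $\hs$ is singular at $\origin$ precisely when the defining function $\psi$ has $d\psi(\origin)=0$, together with the dictionary between hypersurfaces and foliations provided by the one-form $\omega = d\psi$. First I would set up this dictionary carefully: given an irreducible hypersurface $\hs = \{\psi = 0\}$ with $d\psi \not\equiv 0$ on $\hs$, the one-form $\omega = d\psi$ is closed, hence integrable (property (i) is automatic since $d\omega = 0$), and after dividing by a suitable holomorphic function to clear any common factor we obtain $\omega_\psi := \psi^\#$ (a rescaling of $d\psi$) whose zero locus has codimension $\geq 2$; this defines a foliation $\mathcal{F}_\psi$. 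The key point is that $\hs$ appears as a leaf (or union of leaves) of $\mathcal{F}_\psi$, and by Lemma~\ref{LEM:SINGULAR_PTS_HYPERSURFACE} the singular locus of $\hs$ sits inside $\{d\psi = 0\}$.

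For the implication ``Theorem B $\Rightarrow$ Theorem A'': suppose $\hs = \{\psi=0\}$ is irreducible (the reducible case being already handled in Section~\ref{SEC:BACKGROUND}) and singular at $\origin$, so $d\psi(\origin) = 0$. Form the foliation $\mathcal{F}$ defined by a rescaling of $d\psi$; since $d\psi$ vanishes at $\origin$ after possibly absorbing a unit, one checks $\mathcal{F}$ is singular at $\origin$ — here one must be slightly careful that dividing $d\psi$ by its common holomorphic factor $h$ does not make the result nonvanishing at $\origin$, which is where one uses that $\psi$ is prime so that $h$ is a unit times something not responsible for the singularity; more robustly, one chooses $\psi$ generic enough that $d\psi$ itself already has codimension $\geq 2$ zero locus. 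Then $g^*\mathcal{F}$ is singular at $\origin$ by Theorem B. Now $g^*\mathcal{F}$ is defined by a rescaling of $g^*(d\psi) = d(\psi \circ g)$, and $g^{-1}(\hs) = \{\psi \circ g = 0\}$; invoking Lemma~\ref{LEM:LOCAL_LEAFS_PULLBACK} and the leaf structure, the singularity of $g^*\mathcal{F}$ at $\origin$ forces $d(\psi\circ g)$, suitably rescaled, to vanish at $\origin$, so $g^{-1}(\hs)$ is singular at $\origin$ by Lemma~\ref{LEM:SINGULAR_PTS_HYPERSURFACE}.

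For the reverse implication ``Theorem A $\Rightarrow$ Theorem B'': given $\mathcal{F}$ singular at $\origin$ defined by $\omega$, the obstacle is that $\omega$ need not be exact, so there is no single hypersurface to pull back. The plan is to use a pencil of leaves: for a generic value $c$, or after restricting to a transversal, one finds a hypersurface $\hs$ that is a union of leaves of $\mathcal{F}$ and is singular at $\origin$ exactly because $\mathcal{F}$ is — concretely, one can take a local first integral where one exists, or argue that if $\mathcal{F}_\sing$ has codimension $\geq 2$ and passes through $\origin$, then some leaf closure through a nearby point, or the closure of $\mathcal{F}_\sing$ together with surrounding leaves, provides an analytic hypersurface whose singular locus detects $\mathcal{F}_\sing$. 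Applying Theorem A to this $\hs$ gives $g^{-1}(\hs)$ singular at $\origin$; since $g^{-1}(\hs)$ is a union of leaves of $g^*\mathcal{F}$ (Lemma~\ref{LEM:LOCAL_LEAFS_PULLBACK}), its singularity at $\origin$ propagates to $g^*\mathcal{F}_\sing \ni \origin$.

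The main obstacle I expect is the Theorem A $\Rightarrow$ Theorem B direction: producing, canonically and in all cases, a singular hypersurface out of a singular foliation when no global first integral exists, and ensuring that the rescalings (dividing by holomorphic factors to enforce condition (ii)) in the definitions of both $\mathcal{F}$ and $g^*\mathcal{F}$ interact correctly with the singular-locus criterion of Lemma~\ref{LEM:SINGULAR_PTS_HYPERSURFACE}. The cleanest route is likely to avoid first integrals entirely and instead work leafwise: choose a point $p$ near $\origin$ with $p \notin \mathcal{F}_\sing$, let $L$ be a leaf, observe $\overline{L}$ is an analytic hypersurface containing $\mathcal{F}_\sing$ in its singular locus for suitable choices, then chase through $g^{-1}$. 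I would need to verify that this leaf-closure is genuinely analytic and that its singular set contains $\origin$, which may require shrinking neighborhoods and invoking properness of $g$ from Lemma~\ref{LEM:LEMMA_PROPER_MAP}.
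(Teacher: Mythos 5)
The direction ``Theorem B $\Rightarrow$ Theorem A'' in your proposal is essentially the paper's argument run in contrapositive form, and the worries you raise there dissolve: by Lemma \ref{LEM:SINGULAR_PTS_HYPERSURFACE}, after shrinking, $\{d\psi=0\}=\hs_\sing$ has codimension $\geq 2$, so $d\psi$ itself already satisfies condition (ii) and no genericity assumption on $\psi$ or division by a common factor is needed. One caveat: your inference ``$g^*\mathcal{F}$ singular at $\origin$ forces the rescaled $d(\psi\circ g)$ to vanish at $\origin$, so $g^{-1}(\hs)$ is singular by Lemma \ref{LEM:SINGULAR_PTS_HYPERSURFACE}'' is not licensed as stated, because $\psi\circ g$ is not a reduced equation for $g^{-1}(\hs)$; the clean statement is the contrapositive: if $g^{-1}(\hs)=\{x_1=0\}$ were smooth then $\psi\circ g=x_1^m u$ with $u$ a unit, and the rescaled form $m\,u\,dx_1+x_1\,du$ is nonzero at $\origin$, so $g^*\mathcal{F}$ would be smooth.

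The genuine gap is in ``Theorem A $\Rightarrow$ Theorem B''. Your plan is to manufacture, from the singular foliation $\mathcal{F}$ alone, an analytic hypersurface that is a union of leaves and is singular at $\origin$, either via a local first integral or via the closure of a leaf. Neither exists in general: a singular codimension-one holomorphic foliation need not admit any holomorphic first integral near $\origin$ (e.g.\ $\omega=\lambda_1 y_2\,dy_1+\lambda_2 y_1\,dy_2$ with $\lambda_1/\lambda_2$ irrational), and the closure of a generic leaf of such a foliation is not an analytic set. An $\mathcal{F}$-invariant analytic hypersurface through the singular point need not exist either, and even when it does it need not be singular at $\origin$. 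The paper's resolution is to argue by contraposition and to exploit the hypothesis your construction never uses, namely the smoothness of $g^*\mathcal{F}$: straighten $g^*\mathcal{F}$ to $dx_1$, make $g:U\to W$ proper via Lemma \ref{LEM:LEMMA_PROPER_MAP}, and define the trace $\psi(\by)=\sum_{\bx\in g^{-1}(\by)}x_1(\bx)$, a holomorphic function on $W$ satisfying $d\psi\wedge\omega=0$ (verified on inverse branches over a simply connected set away from the critical values, then propagated by analyticity and connectedness). This $\psi$ is exactly the first integral your approach lacks; the component of $\{\psi=0\}$ through $\origin$ is then shown to be singular at $\origin$ (otherwise the computation from the other direction would force $\mathcal{F}$ to be smooth there) and to have preimage precisely $\{x_1=0\}$, which is smooth. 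Without some such use of the smoothness of the pullback to build $\hs$, the implication cannot be established, so the gap is structural rather than technical.
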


\begin{proof}
It suffices to show that the following are equivalent:
\begin{itemize}
\item[(a)]
There is a germ of an irreducible hypersurface $\hs$ that is singular at
${\bm 0}$ such that $g^{-1}(\hs)$ is smooth, and

\item[(b)]
There is a germ of a singular codimension one holomorphic foliation
$\mathcal{F}$ that is singular at 
${\bm 0}$
such that $g^{*}(\mathcal{F})$ is smooth.
\end{itemize}

Suppose (a) in order to prove (b).  
Suppose $W$ is a neighborhood of $\origin$ in $\mathbb{C}^n$ and 
$\hs$ is an irreducible hypersurface in $W$ given by
\begin{align*}
\hs = \{{\by} \in W \, : \, \psi({\by}) = 0\} \quad \mbox{and} \quad \mbox{$d\psi \not \equiv 0$ on $\hs$},
\end{align*}
with $\origin \in \hs_\sing$.  
Lemma \ref{LEM:SINGULAR_PTS_HYPERSURFACE} allows us to 
shrink to a smaller neighborhood $W' \subset W$ of $\origin$ so that 
\begin{align*}
\hs_{\rm sing} = \{{\bm y} \in W' \, : \, d\psi({\bm y}) = 0\}.
\end{align*}
Since $\hs_{\rm sing}$ has codimension two or larger in $\mathbb{C}^n$, 
the one form $d\psi$ defines a singular codimension one holomorphic foliation
$\mathcal{F}$ on $W'$ with ${\bm 0} \in \mathcal{F}_{\rm sing} = \hs_{\rm sing}$.

 By hypothesis, $g^{-1}(\hs)$ is smooth, so we can choose
holomorphic coordinates $(x_1,\ldots,x_n)$ in a neighborhood $U$ of ${\bm 0}$ so
that $g^{-1}(\hs) = \{x_1 = 0\}$.
Therefore $\psi \circ g({\bm x}) = x_1^m u({\bm x})$ for some
integer $m \geq 1$ and some unit (non-vanishing holomorphic function) $u({\bm x})$.  Hence, 
\begin{align}\label{EQN:SMOOTH_PULLBACK1}
g^* (d \psi) = d (g^* \psi) = d(\psi \circ g({\bm x})) = d(x_1^m u({\bm x})) = m x_1^{m-1} d x_1 u({\bm x}) + x_1^m du({\bm x}).
\end{align}
Recall that the pullback $g^* \mathcal{F}$ is defined in a neighborhood of
${\bm 0}$ by any holomorphic one form $g^\# (d \psi)$  obtained by rescaling
$g^* (d \psi)$ in order that the zero locus of $g^\# (d \psi)$ is of codimension two or
larger.  In this case, one can use 
\begin{align}\label{EQN:SMOOTH_PULLBACK2}
g^\# (d \psi) := \frac{1}{x_1^{m-1}} g^* (d \psi) = m \ u({\bm x}) d x_1 + x_1 du({\bm x}).
\end{align}
Note that $g^\# (d \psi)({\bm 0}) =  m \ u({\bm 0}) d x_1 \neq 0$, so that 
$g^* \mathcal{F}$ is smooth in some neighborhood of ${\bm 0}$, thus proving (b).

\vspace{0.1in}
We now prove that (b) implies (a).  Suppose $\mathcal{F}$ is a singular codimension one
holomorphorphic foliation that is singular at ${\bm 0}$ and that $g^* \mathcal{F}$ is smooth in some neighborhood of~${\bm 0}$.   
We will use the hypothesis that $g^* \mathcal{F}$ is smooth to prove that $\mathcal{F}$
is locally given by level hypersurfaces $\{\psi(\by) = {\rm const}\}$ for a suitable holomorphic function $\psi$.  Supposing
that $\psi({\bm 0}) = 0$, this will produce for us an irreducible  singular hypersurface $\hs = \psi^{-1}(0)$ whose
preimage under $g$ is smooth.

More specifically, suppose that $\mathcal{F}$ is given in some neighborhood
$W_0$ of ${\bm 0}$ by the holomorphic one form $\omega$ satisfying (i) and (ii) from the definition
of foliation.
Because of the hypothesis that $g^* \mathcal{F}$ is smooth in a neighborhood of ${\bm 0}$, 
we can do a local holomorphic change of variables so that in some neighborhood $U_0$ of ${\bm 0}$
we have that $g^* \mathcal{F}$ is given by $d x_1$.  In other words, $g^* \mathcal{F}$ is given by the family of vertical
hyperplanes of the form $\{x_1 = {\rm const}\}$ within~$U_0$.

By Lemma \ref{LEM:LEMMA_PROPER_MAP} we can choose neighborhoods $U \subset U_0$
and $W \subset W_0$ of the origin in domain and codomain, respectively, so that $g: U
\rightarrow W$ is a proper map and so that $g^{-1}({\bm 0}) \cap W = {\bm 0}$.
Let us also suppose that $U$ and $W$ are connected.
Let $\delta$ denote the topological degree of this mapping (number of preimages
of a point, when counted with multiplicity).  Let $\pi_1: U \rightarrow
\mathbb{C}$ be projection onto the first coordinate, $\pi_1({\bm x}) = x_1$.
Since $g: U \rightarrow W$ is proper, we can take the pushforward $\psi({\bm
y}) := g_* \pi_1({\bm y})$, which is a holomorphic function on $W$ defined by
\begin{align*}
\psi({\bm y}) := \left(g_* \pi_1\right)({\bm y})  = \sum_{{\bm x} \in g^{-1}({\bm y})} \pi_1({\bm x}),
\end{align*}
where preimages are counted with multiplicities.  (Remark that in certain contexts from
algebraic geometry the proper pushforward is called the ``trace map''; see, for example, \cite[p. 668]{GH}.)

We claim that level hypersurfaces of $\psi$ are tangent to leaves of $\mathcal{F}$ at those places where both
are regular.  
This will hold if and only if 
\begin{align}\label{EQN:WEDGE_CONDITION}
d \psi \wedge \omega = 0 \quad \mbox{on $W$},
\end{align}
 which is equivalent to the kernels of both $d \psi$ and $\omega$ being parallel.  
Because $d \psi \wedge \omega$ is a holomorphic two-form and $W$ is connected, it suffices to check
(\ref{EQN:WEDGE_CONDITION}) on any open subset of~$W$.

Let $N$ be a simply-connected open subset of $W$ that is disjoint from the critical
value locus, so that the Inverse Function Theorem and
Monodromy Theorem allow us to construct $\delta$ holomorphic
inverse branches $h_1,\ldots,h_\delta: N \rightarrow U$ of $g$.
Because $g^* \mathcal{F}$ is given by $d x_1$, we have that 
\begin{align*}
\pi_1 \circ h_\ell({\bm y}) \quad \mbox{is constant on $\gamma \cap N$}
\end{align*}
for any leaf $\gamma$ of $\mathcal{F}$ and any  $1 \leq \ell \leq \delta$.
This implies that $\psi$ is constant on $\gamma \cap N$ and thus that $d \psi
\wedge \omega = 0$ on the open set $N \subset W$.

It is possible from the definition of $\psi$ that $d \psi$ vanishes on some hypersurfaces.
Let $\eta$ be a holomorphic one-form obtained by dividing $d \psi$ by a suitable holomorphic function
in order that the zero locus of $\eta$ has codimension two or larger.

Let $\hs$ be the connected\footnote{A priori, $X$ could be a reducible hypersurface.  
We'll rule this out in the last paragraph of the proof. } component of 
\begin{align*}
\{\by \in W \, : \, \psi(\by) = 0\}
\end{align*}
that contains $\origin$.  We claim that $\hs$ is singular at $\origin$ and
$g^{-1}(\hs) = \{x_1 = 0\}$, so that $g^{-1}(\hs)$ is smooth.

Suppose for contradiction that $\hs$ is smooth at $\origin$.  Then, possibly after restricting to a smaller
neighborhood of $\origin$,  we can choose a suitable
local coordinate~$\bz$ in which $X = \{z_1 =~0\}$.  This implies that in this
coordinate $\psi(\bz) = z_1^n u(\bz)$ for some unit $u(\bz)$.  The same calculation as in
(\ref{EQN:SMOOTH_PULLBACK1}) and (\ref{EQN:SMOOTH_PULLBACK2}) can then be used
to show that $\eta(\origin) \neq 0$ and hence that $\mathcal{F}$ is smooth at
$\origin$, contrary to our hypothesis.

Meanwhile, the smooth locus $\hs_\reg$ is a leaf of $\mathcal{F}$.  Since $g^*
\mathcal{F}$ is given by the vertical leaves $\{x_1 = {\rm const}\}$, Lemma
\ref{LEM:LOCAL_LEAFS_PULLBACK} gives that each point $\by \in \hs_\reg$ has a
neighborhood in $\hs_\reg$ each of whose preimages under $g$ is contained in
one of the vertical leaves $\{x_1 = {\rm const}\}$.  However, since $\hs_\reg$
is dense in $\hs$ and since $g^{-1}(\origin) = \{\origin\}$ we conclude that
$g^{-1}(\hs) = \{x_1 = 0\}$.  Finally, remark that if $\hs$ were reducible,
then $g^{-1}(\hs)$ would also be reducible, contradicting the conclusion of the
previous sentence.

\end{proof}

\section{Proof of Theorems A and B}
\label{SEC:PROOFS}

In Proposition \ref{PROP:A_AND_B_EQUIVALENT} we saw that Theorems A and B are
equivalent.  Therefore, it suffices to prove Theorem A, and we will focus
entirely on that for the remainder of the paper.  
Let us start with two propositions.

\begin{proposition}\label{PROP:ZERO_DIMENSION_SING_SET}
Let $g: (\mathbb{C}^n,{\bm 0}) \rightarrow (\mathbb{C}^n,{\bm 0})$ be a germ of
a finite holomorphic map.  Let $\hs$ be a germ of an
analytic hypersurface that has an isolated singularity at ${\bm 0}$.  Then, the
preimage $g^{-1}(\hs)$ is singular at ${\bm 0}$.
\end{proposition}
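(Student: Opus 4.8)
The plan is to argue by contradiction, first passing to the foliation picture (this is where the equivalence of Theorems A and B is used, cf.\ the footnote and Proposition \ref{PROP:A_AND_B_EQUIVALENT}) in order to sidestep the multiplicity subtleties, and then extracting a topological contradiction from the Milnor fibre of $\hs$.

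Write $\hs = \{\psi = 0\}$ with $\psi$ prime as in~(\ref{EQN:DEFINING_HS}). By Lemma \ref{LEM:SINGULAR_PTS_HYPERSURFACE}, the hypothesis that $\hs$ has an isolated singularity means $\{d\psi = 0\} = \{\origin\}$ near $\origin$, so $d\psi$ satisfies (i) and (ii) and defines a foliation $\mathcal{F}$ with $\mathcal{F}_\sing = \{\origin\}$. Suppose for contradiction that $g^{-1}(\hs)$ is smooth at $\origin$, and choose coordinates $\bx$ on the source with $g^{-1}(\hs) = \{x_1 = 0\}$, so that $\psi\circ g = x_1^m u$ for a unit $u$ and some $m\geq 1$. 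The computation~(\ref{EQN:SMOOTH_PULLBACK1})--(\ref{EQN:SMOOTH_PULLBACK2}) then shows that $g^*\mathcal{F}$ is defined near $\origin$ by $\eta := g^\#(d\psi) = m\,u\,dx_1 + x_1\,du$, which is nonzero at $\origin$; hence $g^*\mathcal{F}$ is smooth at $\origin$, and since $\eta|_{\{x_1 = 0\}} = m\,u\,dx_1$ annihilates exactly the tangent directions to $\{x_1 = 0\}$, the hypersurface $g^{-1}(\hs) = \{x_1 = 0\}$ is precisely the leaf of $g^*\mathcal{F}$ through $\origin$.

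Next I would promote ``$g$ sends leaves to leaves'' to a statement about $\psi\circ g$. Shrinking so that $g^*\mathcal{F}$ is regular, every $\bx\neq\origin$ has $g(\bx)\neq\origin$, which is regular for $\mathcal{F}$, so Lemma \ref{LEM:LOCAL_LEAFS_PULLBACK} applies and shows that $g$ carries each local leaf of $g^*\mathcal{F}$ into a leaf of $\mathcal{F}$, i.e.\ into a level hypersurface of $\psi$. Thus $\psi\circ g$ is constant on the leaves of the regular foliation $g^*\mathcal{F}$; choosing a holomorphic submersion $\phi$ whose fibres are these leaves with $\{\phi = 0\} = \{x_1 = 0\}$, we get $\psi\circ g = \rho\circ\phi$ for some $\rho\in\mathbb{C}\{t\}$. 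Since $\psi\circ g$ vanishes exactly on $\{\phi = 0\}$, $\rho(t) = t^m\cdot(\text{unit})$, and differentiating $\psi\circ g = \rho\circ\phi$ at $\origin$ and using $d\psi(\origin) = 0$ (because $\hs$ is singular) forces $\rho'(\origin) = 0$, hence $m\geq 2$. Consequently, for $e\neq 0$ small, $g^{-1}(\{\psi = e\}) = \{\phi\in\rho^{-1}(e)\}$ is a disjoint union of $m\geq 1$ smooth hypersurfaces $\{\phi = c_1\},\ldots,\{\phi = c_m\}$, each contractible in a suitably chosen neighbourhood of $\origin$.

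Finally I would derive a contradiction from the topology of $\hs$. For $e\neq 0$ small, $F := \{\psi = e\}$ intersected with a Milnor ball is the Milnor fibre of the isolated singularity $\hs$: a connected smooth complex $(n-1)$-manifold with $H^{n-1}(F;\mathbb{Q})\cong\mathbb{Q}^{\mu}$, where $\mu\geq 1$ because $\hs$ is singular at $\origin$. Choosing the neighbourhoods in the source compatibly with this Milnor ball (via Lemma \ref{LEM:LEMMA_PROPER_MAP}), $g$ restricts to a proper, finite, surjective holomorphic map $M := \{\phi = c_1\}\to F$ from a contractible connected $(n-1)$-manifold $M$; surjectivity holds because $g(M)$ is a pure $(n-1)$-dimensional analytic subset of the connected manifold $F$, and properness because $M$ is a connected component of $g^{-1}(F)$. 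A proper finite surjective holomorphic map between connected complex manifolds of equal dimension induces an injection $g^{*}\colon H^{*}(F;\mathbb{Q})\hookrightarrow H^{*}(M;\mathbb{Q})$ (projection formula / transfer); since $M$ is contractible and $n-1\geq 1$ this gives $H^{n-1}(F;\mathbb{Q}) = 0$, contradicting $\mu\geq 1$. (For $n = 2$ this last step is just Riemann--Hurwitz: a disc cannot be a branched cover of the Milnor fibre of a singular plane curve germ.) The main obstacle is the bookkeeping in the middle of the argument --- establishing $\psi\circ g = \rho\circ\phi$ and, above all, choosing the neighbourhoods in source and target compatibly so that the slices $\{\phi = c_j\}$ are genuinely contractible and $g$ is proper onto the Milnor fibre; the concluding transfer argument is standard.
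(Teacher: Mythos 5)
Your route is genuinely different from the paper's: after the common first step (trading the singular hypersurface for the foliation defined by $d\psi$ and reading off smoothness of $g^*\mathcal{F}$ from $\psi\circ g=x_1^m u$), the paper proceeds \emph{algebraically} --- it writes $\omega=\sum a_i\,dy_i$, notes that the $a_i\circ g$ form a regular sequence because $g$ is finite, uses exactness of the Koszul complex at $\Omega^{n-1}$ to place every $\partial g_i/\partial x_\ell$ ($\ell\geq 2$) in the ideal $(a_1\circ g,\ldots,a_n\circ g)$, and derives a contradiction by an order-of-vanishing count after the substitution $x_1\mapsto x_1^N$. You instead aim for a \emph{topological} contradiction via the Milnor fibre. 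Your intermediate steps are sound: the factorization $\psi\circ g=\rho\circ\phi$ through a submersion $\phi$ whose fibres are the leaves of $g^*\mathcal{F}$ is correct (Lemma \ref{LEM:LOCAL_LEAFS_PULLBACK} gives local constancy of $\psi\circ g$ on leaves), as are the transfer injection $H^*(F;\mathbb{Q})\hookrightarrow H^*(g^{-1}(F);\mathbb{Q})$ for the proper finite surjection $g^{-1}(F)\to F$ and the fact that $H^{n-1}(F;\mathbb{Q})\cong\mathbb{Q}^{\mu}$ with $\mu\geq 1$. (The digression establishing $m\geq 2$ is not needed; one component of the preimage suffices. Also, for $n=2$ a hypersurface with isolated singularity may be reducible, a case you should dispose of separately, though it is trivial.)

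The genuine gap is the one you flag yourself, and it is not mere bookkeeping. For the transfer argument you need $g$ restricted to a component of $g^{-1}(F)$ to be \emph{proper} onto $F=\{\psi=e\}\cap B_\epsilon$, which forces the source neighbourhood to be $\Omega:=g^{-1}(B_\epsilon)$; the components of $g^{-1}(F)$ are then $\{\phi=c_j\}\cap\Omega$. Each full leaf $\{\phi=c_j\}$ is contractible inside a Frobenius polydisc, but $\Omega$ is not such a polydisc, and $\{\phi=c_j\}\cap\Omega$ is merely some open (Stein) subset of a copy of a polydisc in $\mathbb{C}^{n-1}$; such a set can perfectly well have nonzero $H^{n-1}$, which is exactly the group you need to kill. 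If instead you shrink the source to a genuine ball to regain contractible slices, you lose properness and surjectivity onto $F$, and the transfer argument collapses. Closing this gap amounts to showing that $\{\psi\circ g=e\}\cap g^{-1}(B_\epsilon)$ still has the homotopy type of the Milnor fibre of $\psi\circ g$ at $\origin$ (a disjoint union of contractible pieces, since $\psi\circ g=\rho(\phi)$ has smooth fibres off $\{\phi=0\}$). This is true, but it rests on the independence of the local Milnor fibre from the choice of ``rug function'' $|g|$ defining the neighbourhood system (Durfee-type results on neighbourhoods of analytic sets), a nontrivial input that you neither state nor prove. As written, the proof is therefore incomplete at its decisive step; the paper's Koszul-complex argument avoids this issue entirely by never leaving the ring $\mathcal{O}_{\mathbb{C}^n,\origin}$.
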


\begin{remark}
A version of this was proved for $\mathbb{C}^2$ in \cite[Lemma 2.4]{KPR} and the proof
below extends what was done there to higher dimensions by using the algebraic technique of Koszul complexes.
\end{remark}

\begin{proof}
If $\hs$ is reducible at $\origin$ then $g^{-1}(\hs)$ is also, and hence $g^{-1}(\hs)$ is singular at $\origin$.

Now suppose for contradiction that $\hs$ is an irreducible analytic hypersurface defined in a
neighborhood $W$ of ${\bm 0}$ and singular only at ${\bm 0}$, with the property
that $g^{-1}(\hs)$ is smooth.  In this one step of the proof of Theorem A, it
will be convenient to use Proposition \ref{PROP:A_AND_B_EQUIVALENT} to replace
$\hs$ with a singular codimension one holomorphic foliation $\mathcal{F}$ that is singular
at ${\bm 0}$ whose pullback $g^{*} \mathcal{F}$ is smooth.  Moreover, in the
proof of Proposition \ref{PROP:A_AND_B_EQUIVALENT} we saw that the resulting
foliation satisfies $\mathcal{F}_{\rm sing} = \hs_{\rm sing}$, so that ${\bm 0}$
is an isolated singularity for $\mathcal{F}$.

Let us write $g$ in coordinates as $g(x_1,\ldots,x_n) = (g_1(x_1,\ldots,x_n),\ldots,g_n(x_1,\ldots,x_n))$.  Suppose $\mathcal{F}$ is given in a neighborhood of ${\bm 0}$ by the holomorphic
one form 
\begin{align*}
\omega = \sum_{i=1}^n a_i({\bm y}) dy_i.
\end{align*}
Remark that the assumption that $\mathcal{F}$ has an isolated singularity at ${\bm 0}$ implies that the common zero set of the $a_i({\bm y})$ is the origin,
in other words the $a_i({\bm y})$ for $1\leq i \leq n$ form a {\em regular sequence} in the ring of germs of
holomorphic functions $\mathcal{O}_{\mathbb{C}^n,{\bm 0}}$.

Suppose for contradiction that $g^* \mathcal{F}$ is smooth in a neighborhood of ${\bm 0}$.  Then, after a suitable change
of variables in the domain, we can suppose
that $g^* \mathcal{F}$ is given by $g^\# \omega = dx_1$.  In other words, if we write
\begin{align*}
g^* \omega = \sum_{\ell=1}^n b_\ell({\bm x}) dx_\ell,
\end{align*}
then 
\begin{align}\label{EQN:VANISHING}
b_\ell({\bm x}) = \sum_{i=1}^n a_i(g({\bm x})) \frac{\partial g_i}{\partial x_\ell} \equiv 0 \quad \mbox{for $2 \leq \ell \leq n$.}
\end{align}
(Remark that one could have $b_1({\bm x}) \not \equiv 1$ as pull-back of foliations allows one
to eliminate common factors.)  

Consider the holomorphic one form
\begin{align*}
\eta = \sum_{i=1}^n a_i(g({\bm x})) dx_i.
\end{align*}
We will now interpret (\ref{EQN:VANISHING}) in terms of the Koszul Complex obtained by wedging with
$\eta$:
\begin{align*}
0 \longrightarrow \mathcal{O}_{\mathbb{C}^n,{\bm 0}} \stackrel{\wedge \eta}{\longrightarrow} \Omega^1_{\mathbb{C}^n,{\bm 0}} \stackrel{\wedge \eta}{\longrightarrow}   \Omega^2_{\mathbb{C}^n,{\bm 0}}  \stackrel{\wedge \eta}{\longrightarrow} \cdots  \stackrel{\wedge \eta}{\longrightarrow}  \Omega^{n-2}_{\mathbb{C}^n,{\bm 0}} \stackrel{\wedge \eta}{\longrightarrow}  \Omega^{n-1}_{\mathbb{C}^n,{\bm 0}} \stackrel{\wedge \eta}{\longrightarrow}  \Omega^{n}_{\mathbb{C}^n,{\bm 0}} \longrightarrow 0.
\end{align*}
Since $g$ is a finite map, the  $a_i(g({\bm x}))$ for $1\leq i \leq n$ also
form a regular sequence in~$\mathcal{O}_{\mathbb{C}^n,{\bm 0}}$.  Therefore,
the homology of the Koszul Complex vanishes in the second to last place (at the
term $\Omega^{n-1}_{\mathbb{C}^n,{\bm 0}}$). See, for example, \cite[Theorem
A2.49]{Eisenbud}.

For each $2 \leq \ell \leq n$
consider $\tau_\ell \in \Omega^{n-1}_{\mathbb{C}^n,{\bm 0}}$ given by
\begin{align}\label{EQN:DEF_TAU}
\tau_\ell := \sum_{i=1}^n (-1)^{n-i} \frac{\partial g_i}{\partial x_\ell} dx_1 \wedge \cdots \wedge \widehat{dx_i} \wedge  \cdots \wedge dx_n.
\end{align}
Here, the hat on $\widehat{dx_i}$ denotes that the $dx_i$ has been omitted from the monomial.
Condition~(\ref{EQN:VANISHING}) implies that 
\begin{align*}
\tau_\ell \wedge \eta = 0.
\end{align*}
As the homology of the Koszul Complex vanishes in the second to last place, there exists $\alpha_\ell \in \Omega^{n-2}_{\mathbb{C}^n,{\bm 0}}$ with
\begin{align*}
\alpha_\ell \wedge \eta = \tau_\ell.
\end{align*}
Let us write
\begin{align*}
\alpha_\ell =  \sum_{1 \leq j < k \leq n} c_\ell^{j,k}({\bm x}) dx_1 \wedge \cdots \wedge \widehat{dx_j} \wedge \cdots \wedge \widehat{dx_k} \wedge  \cdots \wedge dx_n,
\end{align*}
where $c_\ell^{j,k} \in \mathcal{O}_{\mathbb{C}^n,{\bm 0}}$.  

Computing $\alpha_\ell \wedge \eta$ and comparing the coefficients with (\ref{EQN:DEF_TAU}) we find that
\begin{align}
\frac{\partial g_1}{\partial x_\ell} &= \sum_{k=2}^n (-1)^{n-k} c_\ell^{1,k}(\bx) a_k(g(\bx)), \label{EQN:COEFF_FROM_WEDGE_PRODUCT}  \\
\frac{\partial g_i}{\partial x_\ell} &= \sum_{j=1}^{i-1} (-1)^{n-1-j} c_\ell^{j,i}(\bx) a_j(g(\bx)) + \sum_{k=i+1}^{n} (-1)^{n-k} c_\ell^{i,k}(\bx) a_k(g(\bx)) \quad \mbox{for $2 \leq i \leq n-1$, and}
\nonumber \\
\frac{\partial g_n}{\partial x_\ell} &= \sum_{j=1}^{n-1} (-1)^{n-1-j} c_\ell^{j,n}(\bx) a_j(g(\bx)). \nonumber
\end{align}
All we will need from (\ref{EQN:COEFF_FROM_WEDGE_PRODUCT})  is that
for each $2 \leq \ell \leq n$ and $1 \leq i \leq n$ there exist some $e_\ell^{i,m} \in \mathcal{O}_{\mathbb{C}^n,{\bm 0}}$
for $1 \leq m \leq n$ such that
\begin{align}\label{EQN:PARTIALS}
\frac{\partial g_i}{\partial x_\ell} = \sum_{m = 1}^n e_\ell^{i,m}({\bm x}) a_m(g(\bx)).
\end{align}
We will obtain a contradiction from (\ref{EQN:PARTIALS}) and the naive idea
behind it will be that the left hand side of (\ref{EQN:PARTIALS}) should vanish
to lower order than the right hand side because of the partial derivative.  We
will now make this idea rigorous.

Because $g$ is a finite map we have that 
$g_m(0,x_2,\ldots,x_n) \not \equiv 0$ for all but at most one coordinate $m$.  Let $m_1,\ldots,m_p$ be precisely the
coordinates for which this holds (either $p=n-1$ or $p=n$).  For each $1 \leq q \leq p$ we can write
\begin{align*}
g_{m_q}({\bm x}) = x_1 g_{m_q}^1({\bm x}) + g_{m_q}^2(x_2,\ldots,x_m) 
\end{align*}
with $g_{m_q}^1 \in \mathcal{O}_{\mathbb{C}^n,{\bm 0}}$ and $g_{m_q}^2 \in
\mathcal{O}_{\mathbb{C}^{n-1},{\bm 0}}$ such that $g_{m_q}^2(x_2,\ldots,x_m) \not \equiv 0$.
Looking at the power series expansion for each $g_{m_q}^2(x_2,\ldots,x_m)$
there is some smallest total degree $d$ of all monomials appearing over all $1 \leq q \leq p$.  Let
$x_2^{d_2}\cdots x_n^{d_n}$ be one such monomial of this total degree.  It may
occur for multiple choices of~$q$ and, without loss of generality, we can
assume it occurs for $q=1$.  Because the mapping $(x_1,\ldots,x_n) \mapsto
(x_1^N,x_2,\ldots,x_n)$ preserves the foliation given by $dx_1 = 0$ we can
replace $g$ with $\hat g(x_1,\ldots,x_n) := g(x_1^N,x_2,\ldots,x_n)$ and all of
the above discussion also holds for $\hat g$.  In particular, we can choose $N
> d$ so that every component of $\hat g$ vanishes to order at
least $d$ and the component $\hat g_{m_1}(x_1,\ldots,x_n)$ still has the term
$x_2^{d_2}\cdots x_n^{d_n}$ appearing in its power series.

Let $x_r$ be a coordinate appearing with positive exponent in the monomial
$x_2^{d_2}\cdots x_n^{d_n}$.  Then, for $i=m_1$ and $\ell=r$ the left hand side
of (\ref{EQN:PARTIALS}) vanishes to order $d-1$ and the right hand side
vanishes to order at least $d$.  This gives the contradiction.
\end{proof}

\begin{proposition}\label{PROP:SLICING}
Let $U,W \subset \mathbb{C}^n$ be neighborhoods of $\origin$ and suppose
$g : (U,{\bm 0}) \rightarrow  (W,{\bm 0})$ is a
finite holomorphic map, $\hs \subset W$ is an irreducible
hypersurface that is singular at ${\bm 0}$ and has singular locus $\hs_{\rm
sing}$ of positive dimension (passing through the origin), and $g^{-1}(\hs)$ is
smooth.  

Then there exists open sets $U' \subset U$ and $W' \subset W$ with $g(U') = W'$ and a hyperplane $H$ in~$\mathbb{C}^n$ such that:
\begin{enumerate}
\item $\hs \cap H$ is a singular hypersurface within $H \cap W'$,
\item $g^{-1}(H)$ is smooth within $U'$, and
\item $g^{-1}(\hs \cap H)$ is a smooth hypersurface in $g^{-1}(H) \cap U'$.
\end{enumerate}
\end{proposition}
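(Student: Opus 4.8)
The plan is to produce $H$ (together with $U'$ and $W'$) so that $g^{-1}(H)$ and $g^{-1}(\hs\cap H)$ are \emph{regular fibers} of a single holomorphic function and of its restriction to the smooth set $g^{-1}(\hs)$; then conditions (2) and (3) will follow from Sard's theorem, while condition~(1) will be extracted from a Bertini-type genericity statement.

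First I would use Lemma~\ref{LEM:LEMMA_PROPER_MAP} to shrink $U$ and $W$ so that $g\colon U\to W$ is proper with $W$ a connected ball; then $g(U)$ is closed (by properness) and open (since $g$ is finite, hence an open map), so $g(U)=W$. These shrunk neighborhoods will be the required $U'$ and $W'$, for which $g(U')=W'$ automatically. Let $\psi$ be a reduced defining function for $\hs$ as in~(\ref{EQN:DEFINING_HS}); since $g(U)=W$ we have $\psi\circ g\not\equiv0$, so $Y:=g^{-1}(\hs)=\{\psi\circ g=0\}$ is a hypersurface in $U$ whose components all have dimension $n-1$, and which is smooth by hypothesis. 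Fix a generic linear functional $h\colon\mathbb{C}^n\to\mathbb{C}$ and set $f:=h\circ g\colon U\to\mathbb{C}$. I record four facts. (a) As $g(U)=W$ is open it lies in no affine hyperplane, so $f$ is non-constant; as $g$ is finite each component of $Y$ surjects onto $\hs$, and $\hs$ --- being irreducible of dimension $n-1$ and singular at $\origin$ --- is not an affine hyperplane, hence lies in none, so $f|_Y$ is non-constant on every component of $Y$. (b) Hence, by Sard's theorem applied to the holomorphic maps $f\colon U\to\mathbb{C}$ and $f|_Y\colon Y\to\mathbb{C}$ --- here is where smoothness of $g^{-1}(\hs)$ enters, so that $Y$ is a manifold --- the set $A\subset\mathbb{C}$ of their critical values has measure zero. (c) Since $\dim\hs_\sing\ge1$ and $h$ is generic, $h$ is non-constant on a positive-dimensional component of $\hs_\sing$, so $h(\hs_\sing)$ contains a nonempty open subset of $\mathbb{C}$. (d) By the version of Bertini's theorem asserting that a general hyperplane section of a reduced variety is reduced (valid in characteristic zero), for generic $h$ there is a nowhere dense set $B\subset\mathbb{C}$ such that $\psi|_{\{h=c\}}$ has no repeated factor whenever $c\notin B$.

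Now pick $c\in h(\hs_\sing)\setminus(A\cup B)$ --- possible because $h(\hs_\sing)$ has positive measure while $A\cup B$ has measure zero --- set $H:=\{h=c\}$, and choose ${\bm p}\in\hs_\sing\cap H$. Then: (2) holds since $g^{-1}(H)=f^{-1}(c)$ and $c$ is a regular value of $f$; (3) holds since $g^{-1}(\hs\cap H)=g^{-1}(\hs)\cap g^{-1}(H)=(f|_Y)^{-1}(c)$ is a smooth submanifold of $Y$ of pure codimension one inside $g^{-1}(H)$ because $c$ is a regular value of $f|_Y$, and it is nonempty since $c\in h(\hs_\sing)\subset h(\hs)$; and (1) holds because $\hs$, being irreducible of dimension $n-1$ and singular, is not contained in $H$, so $\hs\cap H=\{\psi|_H=0\}$ is a hypersurface in $H\cap W'$, the choice $c\notin B$ makes $\psi|_H$ reduced at ${\bm p}$, and, ${\bm p}$ being a singular point of $\hs$, the germ of the reduced function $\psi$ at ${\bm p}$ has order $\ge2$, hence so does that of $\psi|_H$, so the reduced hypersurface $\hs\cap H$ has multiplicity $\ge2$ --- i.e.\ is singular --- at ${\bm p}$.

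The Sard and open-mapping inputs are routine; the substantive point is~(d), and it is exactly where the ``reduced structure'' warning of the introduction bites: it is not enough that $d(\psi|_H)$ vanish at ${\bm p}$ (which happens for \emph{every} hyperplane through a singular point of $\hs$), one must rule out that $\psi|_H$ acquires a repeated factor along $\hs_\sing$, and this is precisely where characteristic zero is used. The only other apparent difficulty --- reconciling ``$H$ passes through a singular point of $\hs$'' with ``$H$ is Bertini-generic'' --- dissolves once one notes that, after fixing a generic \emph{direction} $h$, the former becomes an \emph{open} condition on the parameter $c$, and it is exactly here that the hypothesis $\dim\hs_\sing\ge1$ gets used.
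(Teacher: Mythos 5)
Your argument is correct and rests on the same strategic idea as the paper's proof---slice $\hs$ by a generic member of a pencil of parallel hyperplanes, apply Sard's theorem once for each of the three required properties, and use $\dim \hs_\sing \ge 1$ to guarantee that a positive-measure set of parameters $c$ gives hyperplanes meeting $\hs_\sing$---but the execution is genuinely different in two respects. First, the paper relocates to a neighborhood $U'$ of a generic smooth point $p_0$ of a top-dimensional component of $g^{-1}(\hs_\sing)$ (the statement does not require $\origin \in U'$), uses the image tangent vector ${\bm w}_1 = Dg(p_1){\bm v}_1$ to fix the direction of the pencil, and obtains Property (2) for \emph{every} member of the family from transversality to $g$ on the shrunken $U'$; you stay at the origin and get (2) for a Sard-generic level set of $f = h\circ g$, which is simpler and makes the relocation unnecessary. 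Second, the one step you cite rather than prove is (d), and the citation is not quite right: classical Bertini concerns the generic member of the \emph{full} linear system of hyperplanes on a projective variety, whereas you need reducedness for the generic member of a \emph{pencil} of parallel hyperplanes in a local analytic germ. The statement you need is nevertheless true, and the paper establishes it by exactly one more application of Sard, which you should substitute for the citation: take $B$ to be the measure-zero set of critical values of $h|_{\hs_\reg}$ together with the at most countably many values $c$ for which $\{h=c\}$ contains a component of $\hs$ or of $\hs_\sing$; for $c \notin B$ the zero set of $d(\psi|_{\{h=c\}})$ inside $\hs\cap\{h=c\}$ is contained in $\hs_\sing\cap\{h=c\}$, which has dimension at most $n-3$ and hence contains no component of the pure $(n-2)$-dimensional set $\hs\cap\{h=c\}$, so $\psi|_{\{h=c\}}$ has no repeated factor. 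With that substitution your proof of (1)---reducedness of $\psi|_H$ plus order $\ge 2$ at a point of $\hs_\sing\cap H$---becomes exactly the paper's argument that $d(\psi|_{H_t})$ vanishes on the nonempty codimension-two set $\hs_\sing\cap H_t$ but on no component of $\hs\cap H_t$, and your closing diagnosis of where reducedness and $\dim\hs_\sing\ge1$ enter matches the paper's.
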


\begin{proof}
There is a holomorphic $\psi: W \rightarrow \mathbb{C}$ such that
\begin{align}\label{EQN:GENERIC_PROOF_DEF_EQN}
\hs = \{{\bm y} \in W \, : \, \psi({\bm y}) = 0\} \quad \mbox{and} \quad \mbox{$d\psi \not \equiv 0$ on
$\hs$}.
\end{align}
Let $S \subset \hs$ be an irreducible component of $\hs_{\rm sing}$ of maximal dimension 
\begin{align*}
1 \leq k:= {\rm dim}(S) \leq n-2.
\end{align*}
Let $V$ be an irreducible component of $g^{-1}(S)$.  Since $g$ is a finite map ${\rm dim}(V)~=~k$
and ${\rm dim}(g^{-1}(S_\sing)) < k$.  Therefore, $V_\reg \setminus g^{-1}(S_\sing)$ is an
open dense subset of $V$.  We can therefore find a point $p_0 \in V_\reg \setminus g^{-1}(S_\sing)$ and
an open neighborhood $U'$ of $p_0$ in $\mathbb{C}^n$ such that $U' \cap V$ is contained in $V_\reg$ and
$W':=g(U')$ satisfies that $W' \cap \hs_{\rm sing} \subset S_\reg$.
Remark that $W'$ is an open neighborhood of $g(p_0)$ because
$g$ is also an open mapping \cite[Theorem 15.1.6]{RUDIN}.

The remainder of the proof will take place within the neighborhoods $U'$ and $W'$.
By abuse of notation we will now refer to $g$ as the 
surjective finite holomorphic mapping $g:U' \rightarrow W'$, refer to $\hs$ as a singular hypersurface
in $W'$ whose singular locus $\hs_\sing = S$ is a smooth manifold of dimension $k$ in $W'$,
and refer to $V$ as a smooth manifold of dimension $k$ in $U'$ satisfying $g(V) \subset S$.

For any non-constant holomorphic curve $\gamma: \mathbb{D} \rightarrow U'$
we have that $Dg(\gamma(t)) \gamma'(t)~\neq~{\bm
0}$ for generic $t \in \mathbb{D}$, since otherwise $g(\gamma(\mathbb{D}))$ would be a single point.  Since
$V$ is a smooth manifold of dimension $k > 0$ we can suppose
$\gamma(\mathbb{D}) \subset V$ and that allows us to find a point $p_1 \in V$
and a tangent vector ${\bm v}_1 \in T_{p_1} V$ such that ${\bm w}_1:=Dg(p_1)
{\bm v}_1 \in T_{g(p_1)} S$ is non-zero.

Let $H_0$ be a hyperplane through $g(p_1)$ that is transverse to ${\bm w}_1$.
By definition, this implies that $H_0$ is transverse to $S$ and to the map $g$.  Let $L$ be
the line through $g(p_1)$ in the direction of~${\bm w}_1$.  Denote by $H_t$ the
one-parameter family of hyperplanes parallel to $H_0$, which we can
parameterize by the intersection point $t \in H_t \cap L$.  As transversality is
an open condition, we can reduce the neighborhoods $U'$ and $W'=g(U')$ so that
each member of $H_t$ is transverse to both $S$ and $g$.  Without loss of
generality, we can suppose the parameter $t$ varies over the unit disc $\mathbb{D}$.

We will now select the desired hyperplane $H$ satisfying Properties (1-3) from this family~$H_t$.
Since each $H_t$ is transverse to $g$ we have that $g^{-1}(H_t)$ is a smooth  submanifold of~$U'$
so that (2) holds for all choices of $t$.

We will now show that Property (1) holds for all parameters $t$ outside of
a zero measure subset of $\mathbb{D}$.  
Let $\tau: W' \rightarrow \mathbb{D}$ be the function that assigns to each ${\bm y}$
the value of the parameter~$t$ such that ${\bm y} \in H_t$.  
Sard's Theorem gives that the critical values of $\tau|_{\hs_\reg}$ form
a measure zero set $\mathbb{D}_0 \subset \mathbb{D}$.  For any $t \in \mathbb{D} \setminus \mathbb{D}_0$
we have that $H_t$ is transversal to $\hs_\reg$.  At each ${\bm y} \in \hs_\reg$ 
we have 
\begin{align*}
{\rm Ker}(d \psi({\bm y})) = T_{\bm y} \hs_\reg.
\end{align*}
This implies that if $t \in \mathbb{D} \setminus \mathbb{D}_0$ then
$d(\psi|_{H_t}) \neq 0$ at every point of $\hs_\reg \cap H_t$.

Meanwhile, $d\psi$ vanishes on $\hs_\sing$.  Since each $H_t$ intersects $S =
\hs_\sing$ transversally for each parameter $t$ and since $\hs_\sing$ has
codimension at least two in $\mathbb{C}^n$, we have that $H_t \cap \hs_\sing$
has codimension at least two in $H_t$.  We conclude that for $t \in \mathbb{D}
\setminus \mathbb{D}_0$ we have
\begin{align*}
\hs \cap H_t := \{{\bm y} \in H_t \, : \, \psi|_{H_t}({\bm y}) = 0\}
\end{align*}
with $d(\psi|_{H_t})$ vanishing only on the non-empty subset $H_t \cap S$ that has codimension two or higher within
$H_t$.  Therefore, if $t \in \mathbb{D} \setminus \mathbb{D}_0$ the hyperplane $H_t$ satisfies Property (1).

We will now show that Property (3) holds for all parameters $t$ outside of
another zero measure subset of $\mathbb{D}$.  Because $g^{-1}(H_t)$ is smooth
for each $t \in \mathbb{D}$ we have that the family $g^{-1}(H_t)$ forms a
smooth codimension one holomorphic foliation of $U'$.  Parameterizing the leaves using a local transversal in $U'$
we can again use Sard's Theorem to show that all but a zero measure subset of the leaves $g^{-1}(H_t)$
are transverse to the smooth manifold $g^{-1}(\hs)$.
Such leaves satisfy that $g^{-1}(\hs) \cap g^{-1}(H_t)$ is smooth.

We therefore find the desired hyperplane $H$ by choosing $t$ outside of the union of the two measure zero
subsets of $\mathbb{D}$ that were described in the previous three paragraphs.
\end{proof}

\begin{proof}[Proof of Theorem A]
The proof will be by induction on the dimension $n$ of the ambient space $\mathbb{C}^n$.
When $n = 2$, the singular locus of $\hs$ must be of dimension $0$ and Theorem A follows from Proposition~\ref{PROP:ZERO_DIMENSION_SING_SET}.

Now suppose that the statement of Theorem A holds when the ambient space has
dimension $n \geq 2$ in order to prove it for when the ambient space has
dimension $n+1$.  Let $\hs$ be a germ of a hypersurface at the origin in
$\mathbb{C}^{n+1}$ that is singular at ${\bm 0}$.  If ${\bm 0}$ is an isolated
singularity for $\hs$, then the result again follows from Proposition
\ref{PROP:ZERO_DIMENSION_SING_SET}.   Meanwhile, if $\hs$ is reducible, then the preimage
under any finite map is again reducible, and hence singular.

Otherwise, we suppose for contradiction that there does exist a finite
holomorphic mapping $g : (U,{\bm 0}) \rightarrow  (W,{\bm 0})$ with $U,W
\subset \mathbb{C}^{n+1}$ open and an irreducible hypersurface~$\hs$ with
positive dimensional singular locus $\hs_\sing$ passing through ${\bm 0}$ such that
$g^{-1}(\hs)$ is smooth in~$U$.   In this case, Proposition~\ref{PROP:SLICING}
will allow us to reduce the dimension of the ambient space by one, in order
to contradict the induction hypothesis.

More specifically, Proposition~\ref{PROP:SLICING} gives us open sets $U' \subset U$ and $W' \subset W$ 
such that $g(U')=W'$ and a hyperplane 
$H$ such that $\tilde{\hs}:=\hs \cap H$ is again
singular in $H \cap W'$  but with $g^{-1}(H)$ smooth in $U'$ and $g^{-1}(\tilde{\hs})$
smooth within $g^{-1}(H) \cap U'$.  
Choosing local coordinates on
$g^{-1}(H)$ and~$H$, respectively, we obtain a new germ of a finite
holomorphic mapping $\tilde{g}:(\mathbb{C}^{n},{\bm 0})\rightarrow (\mathbb{C}^{n},{\bm 0})$ such that the preimage of a singular hypersurface
$\tilde{\hs}$ is smooth.  This violates the induction hypothesis.

We conclude that the statement of Theorem A holds for any dimension ambient space.

\end{proof}

\newpage


\begin{thebibliography}{1}

\bibitem{CERVEAU}
Dominique Cerveau.
\newblock Feuilletages holomorphes de codimension 1. {R}\'{e}duction des
  singularit\'{e}s en petites dimensions et applications.
\newblock In {\em Dynamique et g\'{e}om\'{e}trie complexes ({L}yon, 1997)},
  volume~8 of {\em Panor. Synth\`eses}, pages ix, xi, 11--47. Soc. Math.
  France, Paris, 1999.

\bibitem{DE}
Maciej~P. Denkowski.
\newblock Multiplicity and the pull-back problem.
\newblock {\em Manuscripta Math.}, 149(1-2):83--91, 2016.

\bibitem{ER}
Peter Ebenfelt and Linda~P. Rothschild.
\newblock Images of real submanifolds under finite holomorphic mappings.
\newblock {\em Comm. Anal. Geom.}, 15(3):491--507, 2007.

\bibitem{Eisenbud}
David Eisenbud.
\newblock {\em The geometry of syzygies}, volume 229 of {\em Graduate Texts in
  Mathematics}.
\newblock Springer-Verlag, New York, 2005.
\newblock A second course in commutative algebra and algebraic geometry.

\bibitem{GH}
Phillip Griffiths and Joseph Harris.
\newblock {\em Principles of algebraic geometry}.
\newblock Wiley Classics Library. John Wiley \& Sons, Inc., New York, 1994.
\newblock Reprint of the 1978 original.

\bibitem{KPR}
Scott~R. Kaschner, Rodrigo~A. P\'{e}rez, and Roland K.~W. Roeder.
\newblock Examples of rational maps of {$\Bbb C\Bbb P^2$} with equal dynamical
  degrees and no invariant foliation.
\newblock {\em Bull. Soc. Math. France}, 144(2):279--297, 2016.

\bibitem{LEBL}
Ji\v{r}\'{i} Lebl.
\newblock Pullback of varieties by finite maps, 2008.
\newblock ArXiv preprint \url{https://arxiv.org/abs/0812.2498}.

\bibitem{MILNOR}
John Milnor.
\newblock {\em Singular points of complex hypersurfaces}.
\newblock Annals of Mathematics Studies, No. 61. Princeton University Press,
  Princeton, N.J.; University of Tokyo Press, Tokyo, 1968.

\bibitem{RUDIN}
Walter Rudin.
\newblock {\em Function theory in the unit ball of {${\bf C}^{n}$}}, volume 241
  of {\em Grundlehren der Mathematischen Wissenschaften [Fundamental Principles
  of Mathematical Science]}.
\newblock Springer-Verlag, New York-Berlin, 1980.

\end{thebibliography}
\end{document}